\theoremstyle{plain}
\newtheorem{theorem}{Theorem}
\newtheorem{prop}[theorem]{Proposition}
\newtheorem{lem}[theorem]{Lemma}
\newtheorem{corol}[theorem]{Corollary}
\theoremstyle{definition}
\def\dim{{\rm{dim}}}
\def\ddim{{\mathbf{dim}\,}}
\def\den{{\mathbf{den}\,}}
\def\c{\mathbf{c}}
\def\k{\mathbf{k}}
\def\kQ{\k Q}
\def\CC{{\mathcal{C}}}
\def\T{{\mathcal {T}}}
\def\Z{{\mathbb{Z}}}
\def\add{{\rm{add}}\,}
\def\ind{{\rm{ind}}}
\def\Ext{{\rm{Ext}}}
\def\End{{\rm{End}}}
\def\Hom{{\rm{Hom}}}
\def\Tor{{\rm{Tor}}}
\def\modd{{\mathrm{mod-}}}
\def\ens#1{\left\{ #1 \right\}}
\def\fl{{\longrightarrow}\,}
\title{Modules over cluster-tilted algebras determined by their dimension vectors}
\author{Ibrahim Assem and Grégoire Dupont}
\address{Université de Sherbrooke, 2500 Boul. de l'université, J1K 2R1 Sherbrooke QC, Canada.}
\email{Ibrahim.Assem@USherbrooke.ca}
\address{Université Denis Diderot Paris 7 -- IMJ--PRG, 175 rue du chevaleret, 75013 Paris, France.}
\email{dupontg@math.jussieu.fr}
\date{\today}
\begin{document}

\begin{abstract}
	We prove that indecomposable transjective modules over cluster-tilted algebras are uniquely determined by their dimension vectors. Similarly, we prove that for cluster-concealed algebras, rigid modules lifting to rigid objects in the corresponding cluster category are uniquely determined by their dimension vectors. Finally, we apply our results to a conjecture of Fomin and Zelevinsky on denominators of cluster variables.
\end{abstract}

\maketitle


\section*{Introduction}
	One of the most important results of modern-day representation theory of algebras is the now classical theorem of Gabriel stating that a (finite, connected, acyclic) quiver $Q$ is representation-finite, that is, admits only finitely many isomorphism classes of indecomposable representations, if and only if its underlying diagram is a Dynkin diagram, see \cite{Gabriel:theorem}. The sufficiency part is proven by showing that the map sending each indecomposable representation to its dimension vector (which records the number of its composition factors) establishes a bijection between the set of isomorphism classes of indecomposable representations of $Q$ and the set of positive roots of the Dynkin diagram underlying $Q$. Since then, it has become a standard question in representation theory to identify which indecomposable modules are uniquely determined up to isomorphism by their composition series, or equivalently by their dimension vectors.

	In this paper, we study this question for the class of cluster-tilted algebras which was introduced by Buan, Marsh and Reiten in \cite{BMR1} as a by-product of the theory of cluster algebras of Fomin and Zelevinsky \cite{cluster1}. More precisely, we are interested in the question whether the indecomposable rigid modules (that is, those without self-extensions) are uniquely determined by their dimension vectors.

	In order to state our main result, we first recall that the Auslander-Reiten quiver of a cluster-tilted algebra always has a unique component containing local slices \cite{ABS:slices}, which coincides with the whole Auslander-Reiten quiver whenever the cluster-tilted algebra is representation-finite. This component is called the \emph{transjective} component and an indecomposable module lying in it is called a \emph{transjective} module. With this terminology, our main result may be stated as follows~:

	\begin{theorem}\label{theorem:transjective}
		Let $B$ be a cluster-tilted algebra and $M,N$ be indecomposable transjective $B$-modules. Then $M$ is isomorphic to $N$ if and only if $M$ and $N$ have the same dimension vector.
	\end{theorem}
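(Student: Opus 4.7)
The natural strategy is to lift the statement to the cluster category and then reduce to a hereditary setting through a local slice. Write $B = \End_{\CC}(T)^{op}$ where $\CC = \CC_Q$ is the cluster category of an acyclic quiver and $T$ is a basic cluster-tilting object, so that the functor $F = \Hom_{\CC}(T,-) \colon \CC \fl \modd B$ induces a bijection (Buan--Marsh--Reiten) between isoclasses of indecomposable objects of $\CC$ not belonging to $\add \tau T$ and isoclasses of indecomposable $B$-modules. Transjective $B$-modules correspond under $F$ precisely to indecomposable objects of the transjective component of $\CC$, all of which are rigid (they are preprojective, preinjective, or shifts of projectives with respect to some hereditary realization of $\CC$). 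Letting $\tilde M, \tilde N \in \CC$ denote the lifts of $M, N$, it suffices to prove $\tilde M \cong \tilde N$.

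The plan is then to invoke \cite{ABS:slices} to choose a local slice $\Sigma$ in the transjective component of $\Gamma(\modd B)$ passing through $M$. By the main result of that paper, $\Sigma$ induces a tilted algebra $C_\Sigma = \End_B\!\bigl(\bigoplus_{X \in \Sigma} X\bigr)^{op}$, obtained from a hereditary algebra $k Q_\Sigma$ by a tilting module, and $B$ is the relation extension of $C_\Sigma$. Pulling back along the surjection $B \twoheadrightarrow C_\Sigma$, the module $M$ may be analysed inside $\modd C_\Sigma$, and $\tilde M$ is identified with a preprojective, preinjective, or shifted-projective indecomposable over $kQ_\Sigma$. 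Since the vertices of $Q_\Sigma$ are in natural bijection with the simple $B$-modules along $\Sigma$, the $B$-dimension vector of $M$ agrees with the $kQ_\Sigma$-dimension vector of this corresponding object. Preprojective and preinjective indecomposable $kQ_\Sigma$-modules biject with the real positive roots of the root system of $Q_\Sigma$ and are therefore uniquely determined by their dimension vectors (the shifted-projective case reducing directly to the projective one); hence $\tilde M$ is determined by $\ddim M$ once $\Sigma$ is fixed.

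To conclude, one runs the same reduction for $\tilde N$: because $\ddim N = \ddim M$, the module $N$ has the same support and the same entries on each vertex of $Q_\Sigma$, so it must fall in the hereditary window associated with $\Sigma$ (or with a local slice that contains both $M$ and $N$, which can be arranged by translating $\Sigma$ within the transjective component), and it corresponds to a $kQ_\Sigma$-indecomposable with the same dimension vector as the one coming from $\tilde M$. The hereditary uniqueness then forces $\tilde M \cong \tilde N$, and so $M \cong N$. The main technical obstacle is precisely this step of exhibiting a single local slice that simultaneously captures $M$ and $N$: a priori $N$ might belong to a different hereditary copy of the cluster category than the one chosen for $M$. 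Overcoming this rests on the flexibility in the choice of local slices granted by \cite{ABS:slices} together with the observation that having the same dimension vector is already a strong positional constraint in the transjective component; once this is in place, the rest of the argument is a transfer from the classical hereditary theorem on real roots.
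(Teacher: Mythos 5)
Your overall strategy---lift to the cluster category and reduce to the hereditary case, where rigid indecomposables are determined by their dimension vectors---is the right one, and it is the paper's endpoint as well. But the proposal has a genuine gap at exactly the point you flag yourself: you never produce a single hereditary ``window'' that sees both $M$ and $N$ \emph{together with} the information that their dimension vectors agree. A local slice through $M$ gives a tilted algebra $C_\Sigma$ and a hereditary algebra $kQ_\Sigma$, but there is no reason for $N$ to be captured by that same slice, and your proposed remedy (``translating $\Sigma$'' or appealing to the idea that ``having the same dimension vector is already a strong positional constraint in the transjective component'') is circular: that the dimension vector constrains the position of a transjective module is precisely the content of the theorem being proved. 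There is also a smaller inaccuracy: the $B$-dimension vector of $M$ does \emph{not} literally agree with the $kQ_\Sigma$-dimension vector of the corresponding hereditary module; passing through the tilting functor transforms dimension vectors by an (invertible) linear map, so equality would be preserved, but only once both modules are known to sit in the same torsion class over the same tilted algebra---which is again the unproved step.

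The paper closes exactly this gap by a completely different mechanism. It connects the given tilting object $T$ to $\kQ$ by a finite sequence of mutations and proves an induction step (Lemma \ref{lem:induction}): if $M$ and $N$ are compatible with the exchange pair $(\Sigma^2 T_k,\Sigma^2 T_k^*)$ (which transjective objects always are, by \cite[Corollary 4.2]{BMR3}) and have equal $\ddim\Hom_{\CC}(T,-)$, then either they are forced to be isomorphic to $\Sigma T_k^*$, or they still have equal $\ddim\Hom_{\CC}(T',-)$ after the mutation, because the exchange triangles determine $\dim_\k\Hom_{\CC}(T_k^*,-)$ from the $\Hom$-dimensions at the unchanged summands. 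Iterating all the way to $\kQ$ puts both objects into one hereditary module category covering the entire transjective component at once, where the classical uniqueness applies. Without this transport-of-dimension-vectors-under-mutation argument (or some substitute for it), your reduction to $kQ_\Sigma$ does not go through.
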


	As we shall see in Section \ref{section:cex}, the statement of the theorem does not hold true without the assumption that $M$ and $N$ are transjective. 

	One important special case of our theorem is already known~: Ringel \cite{Ringel:clusterconcealed} and, independently Geng and Peng \cite{GengPeng} have proved that, over a representation-finite cluster-tilted algebra, indecomposable modules are uniquely determined by their dimension vectors. Our result however makes no assumption on the representation type of the algebra.

	For our second result, we consider one important subclass of cluster-tilted algebras, that of the cluster-concealed algebras studied in \cite{Ringel:clusterconcealed}. This class consists of those cluster-tilted algebras which are endomorphism algebras of transjective tilting objects in the cluster category. We are now able to state our result~:
	\begin{prop}\label{prop:concealed}
		Let $B$ be a cluster-concealed algebra and $M,N$ be indecomposable rigid $B$-modules lifting to rigid objects in the corresponding cluster category. Then $M$ is isomorphic to $N$ if and only if $M$ and $N$ have the same dimension vector.
	\end{prop}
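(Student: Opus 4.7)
The plan is to translate the problem from $\modd B$ to the cluster category $\CC$ via the functor $F = \Hom_\CC(T,-)$, where $T$ is a transjective cluster-tilting object with $B = \End_\CC(T)$ (such a $T$ existing by the cluster-concealed hypothesis). This functor induces an equivalence $\CC/\add(T[1]) \xrightarrow{\sim} \modd B$, and restricts to a bijection between indecomposable rigid objects of $\CC$ having no summand in $\add(T[1])$ and indecomposable rigid $B$-modules that lift to rigid objects of $\CC$. Writing $M = F(\tilde M)$ and $N = F(\tilde N)$ for such lifts, the proposition reduces to showing that the equality $\ddim F(\tilde M) = \ddim F(\tilde N)$ forces $\tilde M \cong \tilde N$ in $\CC$.

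For this, I would invoke the theorem of Dehy and Keller according to which, in a Hom-finite $2$-Calabi--Yau triangulated category equipped with a cluster-tilting object, the index is a complete invariant of indecomposable rigid objects. It is therefore enough to prove that $\ddim F(\tilde M)$ determines $\ind_T(\tilde M)$. To this end, consider a minimal $\add T$-presentation triangle $T_1 \fl T_0 \fl \tilde M \fl T_1[1]$ with $T_0, T_1 \in \add T$. Applying $F$ yields a minimal projective presentation $F(T_1) \fl F(T_0) \fl F(\tilde M) \fl 0$ in $\modd B$, and since $\ind_T(\tilde M) = [T_0] - [T_1]$ in $K_0(\add T)$, it coincides, via the natural identification $K_0(\add T) \cong K_0(\mathrm{proj}\,B)$, with the $g$-vector of the $B$-module $F(\tilde M)$.

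It thus remains to recover the $g$-vector of $F(\tilde M)$ from its dimension vector, and it is precisely here that the cluster-concealed hypothesis enters essentially. Since $T$ is transjective, it lifts to a tilting object $\bar T$ over the hereditary algebra $H = \kQ$ underlying $\CC$, which gives access to the non-degenerate Euler form of $H$ and to the tilted algebra $C = \End_H(\bar T)$ sitting between $H$ and $B$. I would then express the passage from dimension vectors to $g$-vectors of rigid $B$-modules lifting to $\CC$ by an invertible integer matrix, read off either from the Cartan matrix of $C$ or, equivalently, from the denominator formula for cluster variables of Caldero--Chapoton type. The main obstacle is precisely this invertibility claim: for a general cluster-tilted algebra the translation between $g$-vectors and dimension vectors of rigid modules is \emph{not} encoded by an invertible map, and a careful use of the transjectivity of $T$ is required to exclude accidental coincidences of dimension vectors arising from distinct indices, ultimately relying on the fact that indecomposable rigid modules over the hereditary algebra $H$ are themselves determined by their dimension vectors and on how this rigidity is transported through $F$.
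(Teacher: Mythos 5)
Your reduction of the statement to ``the dimension vector of $F(\widetilde M)$ determines the index $[T_0]-[T_1]$'' is a legitimate reformulation, since by Dehy--Keller rigid objects of $\CC$ are indeed determined by their index. But the proposal then stops exactly at the point where the actual content of the proposition lies: you never prove that the dimension vector determines the $g$-vector, and you yourself concede that ``the main obstacle is precisely this invertibility claim.'' The mechanism you sketch cannot work as stated: because modules over a cluster-tilted algebra generically have infinite projective dimension, the dimension vector of $F(\widetilde M)$ is \emph{not} obtained from the $g$-vector by multiplying by the Cartan matrix (the presentation $F(T_1)\fl F(T_0)\fl F(\widetilde M)\fl 0$ has a nonzero kernel on the left), and the paper's own counterexample in type $\widetilde A_{2,1}$ shows that for a general cluster-tilted algebra distinct indices can produce equal dimension vectors. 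So the passage from $\ddim$ to the index is precisely the theorem to be proved, and invoking ``a careful use of the transjectivity of $T$'' without an argument leaves a genuine gap.

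For contrast, the paper does not go through $g$-vectors at all. It proves (Proposition \ref{prop:muttransjective}, via preinjective tilting modules and a strictly decreasing chain of torsion classes) that $T$ can be connected to $\kQ$ by a sequence of mutations passing only through \emph{transjective} tilting objects; it then checks that every indecomposable rigid object of $\CC$ (transjective by \cite[Corollary 4.2]{BMR3}, regular by the argument of \cite[Lemma 3.5]{BM:affine}) is compatible with each exchange pair occurring along this sequence, so that Lemma \ref{lem:induction} propagates the equality $\ddim \Hom_{\CC}(T^{(i)},\widetilde M)=\ddim \Hom_{\CC}(T^{(i)},\widetilde N)$ step by step down to the hereditary algebra $\kQ$, where rigid indecomposables are classically determined by their dimension vectors. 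If you want to complete your approach, you would need an independent proof that, for rigid modules over a cluster-concealed algebra lifting to rigid objects, the dimension vector determines the index --- and the only route the paper offers to such a statement is the mutation argument just described.
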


	In the case where $B$ is wild, we do not know whether all indecomposable rigid $B$-modules lift to rigid objects in the corresponding cluster category. This, however, is the case if $B$ is tame.
	\begin{corol}\label{corol:affineconcealed}
		Let $B$ be a cluster-concealed algebra which is either representation-finite or tame and let $M,N$ be two indecomposable rigid $B$-modules. Then $M$ is isomorphic to $N$ if and only if $M$ and $N$ have the same dimension vector.
	\end{corol}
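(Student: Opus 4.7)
The plan is to deduce Corollary \ref{corol:affineconcealed} from Theorem \ref{theorem:transjective} in the representation-finite case and from Proposition \ref{prop:concealed} in the tame case. In the tame case, all the work reduces to checking that the lifting hypothesis of Proposition \ref{prop:concealed} is automatic, namely that every indecomposable rigid $B$-module admits a rigid preimage in the corresponding cluster category.

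The representation-finite case is essentially immediate. Any representation-finite cluster-tilted algebra has Auslander--Reiten quiver reduced to its transjective component, so every indecomposable $B$-module is transjective and Theorem \ref{theorem:transjective} applies to $M$ and $N$ directly; the rigidity assumption is not even needed here. For the tame case, write $B \simeq \End_{\CC_H}(T)^{op}$ with $H$ a tame hereditary algebra and $T$ a transjective cluster-tilting object in the cluster category $\CC_H$, so that the functor $F = \Hom_{\CC_H}(T,-)$ induces the equivalence $\CC_H/\add T[1] \simeq \modd B$ and any indecomposable $B$-module $M$ has a unique indecomposable lift $\tilde M$ in $\CC_H \setminus \add T[1]$. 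I would then split into cases according to whether $\tilde M$ lies in the transjective part of $\CC_H$, in which case $\tilde M$ is directing and hence rigid automatically, or in a regular tube of $\CC_H$. In the second case, I would compare $\Ext^1_B(M,M)$ with $\Ext^1_{\CC_H}(\tilde M, \tilde M)$ via a Buan--Marsh--Reiten-type formula combined with the $2$-Calabi--Yau symmetry $\Ext^1_{\CC_H}(X,Y) \cong D\Ext^1_{\CC_H}(Y,X)$, and use the classification of indecomposables in standard tubes to identify the rigid ones as exactly the quasi-simples in tubes of rank at least two, which are rigid in $\CC_H$ as well as over $H$. Once the lifts of $M$ and $N$ are both known to be rigid, Proposition \ref{prop:concealed} yields $M \simeq N$.

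The main obstacle is the tube case of the lifting step: the shift direction in $\CC_H$ can in principle contribute extensions invisible to $\Ext^1_B(M,M)$, and this is precisely what is unknown in the wild case, as noted in the text preceding the corollary. Tameness is used crucially to ensure that the tubes in $\CC_H$ are standard and that the rigid indecomposables of $\CC_H$ form a finite, explicit list that can be matched with the rigid $B$-modules; once this matching is done, no extensions slip through and the lift of a rigid $B$-module is automatically rigid.
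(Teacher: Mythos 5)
Your overall strategy coincides with the paper's: reduce the corollary to showing that every indecomposable rigid $B$-module lifts to a rigid object of the cluster category, dispose of the transjective (hence of the whole Dynkin) case immediately, and treat the regular case using standardness of tubes together with the formula $\Ext^1_{\CC}(\widetilde M,\widetilde M)\simeq \Ext^1_{\kQ}(F_{\kQ}\widetilde M,F_{\kQ}\widetilde M)\oplus D\Ext^1_{\kQ}(F_{\kQ}\widetilde M,F_{\kQ}\widetilde M)$ from \cite{BMRRT}. However, two points in your tube argument need repair. First, the classification you invoke is wrong: in a standard stable tube of rank $p\geq 2$ the rigid indecomposables are exactly those of quasi-length at most $p-1$, not only the quasi-simples. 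For tubes of rank at least $3$ (which occur in types $\widetilde A_{p,q}$ with $\max(p,q)\geq 3$, $\widetilde D_n$ and $\widetilde E_n$) your argument as stated does not cover the rigid modules of quasi-length $2,\dots,p-1$, so the lifting property would remain unproved precisely for them.

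Second, there is no usable formula comparing $\Ext^1_B(M,M)$ with $\Ext^1_{\CC}(\widetilde M,\widetilde M)$ in the direction you need: the counter-example of Section \ref{section:cex} exhibits rigid $B$-modules whose lifts are not rigid, so vanishing of $\Ext^1_B(M,M)$ cannot in general be promoted to vanishing of $\Ext^1_{\CC}(\widetilde M,\widetilde M)$. What actually makes the argument work --- and what the paper does --- is that both $T$ and $\kQ$ are transjective, so neither has a summand in the tube $\mathcal T$ containing $\widetilde M$; consequently $\mathcal T$ is identified with a standard tube of $\modd B$ and with a standard tube of $\modd \kQ$, with $M$ and $F_{\kQ}\widetilde M$ sitting at the same position. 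Standardness then transfers rigidity of $M$ over $B$ to rigidity of $F_{\kQ}\widetilde M$ over $\kQ$, and only at that point does the displayed formula give $\Ext^1_{\CC}(\widetilde M,\widetilde M)=0$. You gesture at this matching in your last paragraph, but the detour through $\modd \kQ$, rather than a direct $\Ext^1_B$-versus-$\Ext^1_{\CC}$ comparison, is the essential step; with that and the corrected quasi-length bound your proof becomes the paper's.
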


	We next apply our results in order to obtain partial results towards a classical conjecture stating that different cluster variables, when expressed as Laurent polynomials in any cluster, have different denominator vectors, see \cite[Conjecture 4.17]{FZ:cdm03}. 

	We recall that an \emph{acyclic cluster algebra} $\mathcal A$ is a skew-symmetric cluster algebra with coefficients in an arbitrary semifield associated with a quiver which is equivalent under mutation to an acyclic quiver $Q$. It follows from the Laurent phenomenon \cite{cluster1} that for any cluster $\c=(c_i, i \in Q_0)$ of $\mathcal A$, every cluster variable $x \in \mathcal A$ can be written as 
	$$x = \frac{P(c_i, i \in Q_0)}{\prod_{i \in Q_0} c_i^{d_i}}$$
	where $P(c_i, i \in Q_0)$ is a polynomial not divisible by any $c_i$ and where $d_i \in \Z$ for any $i \in Q_0$. The $Q_0$-tuple 
	$$\den(x) = (d_i)_{i \in Q_0}$$ is called the \emph{denominator vector} of $x$ in the cluster $\c$. 

	If $\CC$ is the cluster category of $Q$, then it known that there is a bijection, the \emph{cluster character}, between the set of cluster variables in $\mathcal A$ and the set of isomorphism classes of indecomposable rigid objects in $\CC$ inducing a bijection between the set of clusters in $\mathcal A$ and the set of isomorphism classes of tilting objects in $\CC$, see \cite{BMRT,CK2,FK}. A cluster variable (or a cluster, respectively) in $\mathcal A$ is called \emph{transjective} if it corresponds to an indecomposable transjective object (or a transjective tilting object, respectively) in $\CC$. If $\mathcal A$ is of finite type (in the sense of \cite{cluster2}) or of rank two, then every cluster variable, and thus every cluster, is transjective. 

	In this terminology, our Theorem \ref{theorem:transjective} and our Proposition \ref{prop:concealed} imply the following two corollaries~:
	\begin{corol}\label{corol:dentransj}
		Let $\mathcal A$ be an acyclic cluster algebra and $\CC$ the corresponding cluster category. Then the transjective cluster variables in $\mathcal A$ are uniquely determined by their denominator vectors in any cluster of $\mathcal A$.
	\end{corol}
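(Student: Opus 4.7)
The plan is to fix a cluster $\c$ of $\mathcal A$ and translate the statement through the cluster character. Under the bijection, $\c$ corresponds to a tilting object $T$ in $\CC$; set $B = \End_\CC(T)^{op}$ and consider the standard functor $F = \Hom_\CC(T,-): \CC \fl \modd B$, which induces an equivalence $\CC/\add T[1] \simeq \modd B$. The key input from the cluster character dictionary is the identification, for every indecomposable rigid object $M$ of $\CC$ lying outside $\add T[1]$, of $\den(X_M)$ in $\c$ with $\ddim FM$; complementarily, the indecomposable rigids $M \in \add T[1]$ give rise to the cluster variables in $\c$ themselves, carrying the denominators $-e_i$.

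Granted these formulas, the deduction is a short case analysis. Let $x, x'$ be transjective cluster variables with $\den(x) = \den(x')$ in $\c$, corresponding under the cluster character to indecomposable rigid transjective objects $M, M'$ of $\CC$. If $M \in \add T[1]$, then $\den(x) = -e_i$ has a negative coordinate, which rules out $M' \notin \add T[1]$ (as $\ddim FM'$ would then be nonnegative), so $M' \in \add T[1]$ and matching coordinates forces $M \cong M'$. Otherwise $M, M' \notin \add T[1]$, so $FM$ and $FM'$ are indecomposable $B$-modules with the same dimension vector. As the equivalence $\CC/\add T[1] \simeq \modd B$ carries the transjective component of $\CC$ to the transjective component of $\modd B$, the modules $FM$ and $FM'$ are both transjective. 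Theorem \ref{theorem:transjective} then yields $FM \cong FM'$, from which $M \cong M'$ and $x = x'$ follow.

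The main obstacle in this plan is justifying the identity $\den(X_M) = \ddim FM$ for transjective $M$ in the full generality of arbitrary acyclic type. This relation belongs to the standard cluster character machinery developed by Caldero--Chapoton, Caldero--Keller, Palu and Fu--Keller; one has to check that its formulation covers every transjective indecomposable rigid object in every cluster category $\CC$, and not merely in the finite or affine case. A subsidiary point, easily verified via the behaviour of local slices under the cluster-tilted equivalence, is that the functor $F$ sends transjective components to transjective components. Once both are in place, the argument reduces cleanly to Theorem \ref{theorem:transjective}.
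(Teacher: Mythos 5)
Your proposal is correct and follows essentially the same route as the paper: identify $\den(X^T_M)$ with $\ddim\Hom_{\CC}(T,M)$ for transjective $M\notin\add(\Sigma T)$ (this is precisely the result of Buan--Marsh--Reiten that the paper cites, so your ``main obstacle'' is a known theorem), treat the case $M\in\add(\Sigma T)$ separately, and conclude via Theorem \ref{theorem:transjective}. The only preliminary you omit is the reduction to the coefficient-free case (denominator vectors are independent of the coefficient system), which the paper needs because the acyclic cluster algebra is allowed arbitrary coefficients while the denominator formula is stated coefficient-free.
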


	\begin{corol}\label{corol:denaffine}
		Let $\mathcal A$ be an acyclic cluster algebra and $\CC$ the corresponding cluster category. Then the cluster variables in $\mathcal A$ are uniquely determined by their denominator vectors in any transjective cluster of $\mathcal A$.
	\end{corol}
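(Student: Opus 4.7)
Let $\c$ be a transjective cluster of $\mathcal{A}$ and $T = \bigoplus_i T_i$ the corresponding transjective tilting object in $\mathcal{C}$, so that $B = \End_\mathcal{C}(T)^{op}$ is cluster-concealed. The plan is to reduce the statement to Proposition \ref{prop:concealed} via the functor $F = \Hom_\mathcal{C}(T, -) : \mathcal{C} \to \modd B$, which induces a bijection between indecomposable objects of $\mathcal{C}$ outside $\add T[1]$ and indecomposable $B$-modules, and which sends rigid objects to rigid modules lifting to rigid objects in $\mathcal{C}$.

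The bridge is the denominator theorem for cluster characters, in the form expressing $\den_\c(x_M)$ in terms of the $B$-module $F(M)$ together with the index of $M$ with respect to $T$. For our purposes, the consequence needed is that the sign pattern of the denominator vector separates the three regimes $M \in \add T$, $M \in \add T[1]$, and $M \notin \add(T \oplus T[1])$; in the first two regimes the denominator vector already determines $M$ up to isomorphism, while in the third it coincides with $\ddim F(M)$. Given two cluster variables $x_M, x_N$ with $\den_\c(x_M) = \den_\c(x_N)$, the sign pattern forces $M$ and $N$ into the same regime. The two boundary regimes identify $M \cong N$ directly. In the generic regime, $F(M)$ and $F(N)$ are indecomposable rigid $B$-modules with the same dimension vector, both lifting to the rigid objects $M, N \in \mathcal{C}$, so Proposition \ref{prop:concealed} gives $F(M) \cong F(N)$, and the bijectivity of $F$ on objects outside $\add T[1]$ yields $M \cong N$, hence $x_M = x_N$.

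The main obstacle is invoking the denominator formula in the generality required here, for arbitrary cluster variables of a possibly wild acyclic cluster algebra expressed in any transjective cluster. In finite and tame types this is classical after Caldero--Keller; for general transjective clusters the formula can be transported from the initial acyclic seed, where it is known, along a sequence of mutations through transjective tiltings, using the compatibility of the cluster character with mutation. Once this is in hand, everything else reduces to the application of Proposition \ref{prop:concealed} described above.
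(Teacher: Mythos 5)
Your proposal is correct and follows essentially the same route as the paper: handle the case where $M$ or $N$ lies in $\add(\Sigma T)$ separately, and otherwise use the denominator formula $\den(X^T_M)=\ddim\Hom_{\CC}(T,M)$ for a transjective tilting object $T$ to reduce to Proposition~\ref{prop:concealed}. The ``main obstacle'' you flag --- the validity of that denominator formula for an arbitrary transjective cluster --- is exactly what the paper cites from Buan--Marsh--Reiten \cite{BMR3} (together with the compatibility statements already invoked in the proof of Proposition~\ref{prop:concealed}), so no new argument is needed there.
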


	The article is organised as follows. In Section \ref{section:background} we recall the necessary background and notations. In Section \ref{section:transjective} we prove Theorem \ref{theorem:transjective} and Corollary \ref{corol:dentransj} and we show that the transjectivity assumption cannot be removed from Theorem \ref{theorem:transjective}. Finally, Section \ref{section:clusterconcealed} presents the proofs of Proposition \ref{prop:concealed} and Corollaries \ref{corol:affineconcealed} and \ref{corol:denaffine}.

\section{Background and notations}\label{section:background}
	Throughout the article, $\k$ denotes an algebraically closed field and every algebra is a $\k$-algebra. 

	Given a finite dimensional algebra $A$, we denote by $\modd A$ the category of finitely generated right $A$-modules and by $\ind-A$ a full subcategory of $\modd A$ consisting of a complete set of representatives of isomorphism classes of indecomposable $A$-modules. For any $A$-module $M$, we denote by $\ddim M$ its dimension vector which we view as an element in $\Z^n$ where $n$ is the number of simple $A$-modules. An $A$-module $M$ is called \emph{rigid} if $\Ext^1_A(M,M)=0$. A \emph{tilting module} $T$ in $\modd A$ is a basic rigid $A$-module of projective dimension at most one and with $n$ non-isomorphic direct summands $T_1, \ldots, T_n$. If $T$ is such a tilting module, then it follows from \cite{HU:almostcomplete} that for any $k$ such that $1 \leq k \leq n$ the \emph{almost complete} module $\overline T = \bigoplus_{i \neq k} T_i$ admits at most two \emph{complements}, that is, non-isomorphic indecomposable $A$-modules $T_k$ and $T_k^*$ such that $\overline T \oplus T_k$ and $\overline T \oplus T_k^*$ are tilting modules. Moreover, if $\overline T$ is sincere, then there exist exactly two such complements. 

	Given a quiver $Q$, we denote by $Q_0$ its set of points and by $Q_1$ its set of arrows. We always assume that $Q_0$ and $Q_1$ are finite sets and we let $n=|Q_0|$. If $Q$ is acyclic, we denote by $\kQ$ its path algebra over $\k$. It is a finite dimensional hereditary algebra whose bounded derived category $D^b(\modd \kQ)$ is triangulated with suspension functor $[1]$ and Auslander-Reiten translation $\tau$. The \emph{cluster category} $\CC$ of $Q$, introduced in \cite{BMRRT}, is the orbit category in $D^b(\modd \kQ)$ of the autoequivalence $\tau^{-1}[1]$. It is a triangulated category \cite{K} whose suspension functor $\Sigma$ is induced by the shift in $D^b(\modd \kQ)$ or, equivalently, by the Auslander-Reiten translation. Moreover, the category $\CC$ is \emph{Calabi-Yau of dimension 2}, that is, there is a bifunctorial isomorphism
	$$\Hom_{\CC}(M,\Sigma^2 N) \simeq D\Hom_{\CC}(N,M)$$
	for any objects $M$ and $N$ in $\CC$ where $D= \Hom_{\k}(-,\k)$ is the standard duality.

	The Auslander-Reiten quiver $\Gamma(\CC)$ of $\CC$ contains a unique connected component containing the indecomposable projective $\kQ$-modules. This component is called the \emph{transjective component} of $\Gamma(\CC)$. If $Q$ is of Dynkin type, this is the only connected component. Otherwise, there are infinitely many other connected components, which are called the \emph{regular components} of $\Gamma(\CC)$. An object in $\CC$ is called \emph{transjective} if all its indecomposable direct summands belong to the transjective component. Every indecomposable transjective object $M$ is \emph{rigid} in $\CC$, that is, $\Ext^1_{\CC}(M, M) = 0$. 

	A \emph{tilting object} $T$ in $\CC$ is a basic object such that for any object $X$ in $\CC$, we have $\Ext^1_{\CC}(T,X)=0$ if and only if $X \in \add(T)$ (these are also referred to as \emph{cluster-tilting objects} in the literature). Let $T$ be such a tilting object. Then it is well known that $T$ has $n$ indecomposable direct summands which we denote by $T_i$, with $1 \leq i \leq n$. 

	We denote by $B = \End_{\CC}(T)$ the \emph{cluster-tilted algebra} corresponding to $T$. We denote by $(\add(\Sigma T))$ the ideal of morphisms in $\CC$ factoring through the subcategory $\add (\Sigma T)$. Then it is proved in \cite{BMR1} that the functor $\Hom_{\CC}(T,-)$ induces an equivalence of categories~
	$$\Hom_{\CC}(T,-): \CC/(\add(\Sigma T)) \xrightarrow{\sim} \modd B.$$
	If $M$ is a $B$-module, a \emph{lift} of $M$ in $\CC$ is an object $\widetilde M$ in $\CC$ such that $\Hom_{\CC}(T,\widetilde M) \simeq M$ in $\modd B$. The \emph{transjective} $B$-modules are those having a transjective lift in $\CC$.

	Fix $1 \leq k \leq n$. We let $\overline T = \bigoplus_{i \neq k} T_i$ be the corresponding \emph{almost complete object}. Then it is proved in \cite{BMRRT} that there exists a unique object $T_k^*$ in $\CC$ such that $T_k^* \not \simeq T_k$ and such that $T'=\overline T \oplus T_k^*$ is a tilting object in $\CC$, called \emph{the mutation of $T$ at $T_k$}. The pair $(T_k,T_k^*)$ is called an \emph{exchange pair} in $\CC$ and it is known that $\Ext^1_{\CC}(T_k,T_k^*) \simeq \k$ so that, up to isomorphism, there exist exactly two non-split triangles
	$$T_k \fl E \fl T_k^* \fl \Sigma T_k \text{ and } T_k^* \fl E' \fl T_k \fl \Sigma T_k^*,$$
	called the \emph{exchange triangles} associated with the exchange pair $(T_k,T_k^*)$.

	Following \cite{BMR3}, an indecomposable rigid object $M$ in $\CC$ is called \emph{compatible with the exchange pair} $(T_k,T_k^*)$ if either $M \simeq \Sigma^{-1}T_k$, or $M \simeq \Sigma^{-1}T_k^*$ or
	$$\dim_{\k} \Hom_{\CC}(M,T_k) + \dim_{\k} \Hom_{\CC}(M,T_k^*) = \max \ens{\dim_{\k} \Hom_{\CC}(M,E), \dim_{\k} \Hom_{\CC}(M,E')}.$$

\section{Transjective modules and cluster variables}\label{section:transjective}
	\begin{lem}\label{lem:dim}
		Let $(T_k,T_k^*)$ be an exchange pair in $\CC$ as above and let $M$ be an indecomposable rigid object in $\CC$. Then $M$ is compatible with the exchange pair $(\Sigma^2 T_k, \Sigma^2 T_k^*)$ if and only if either $M \simeq \Sigma T_k$ or $M \not \simeq \Sigma T_k^*$ or 
			$$\dim_{\k} \Hom_{\CC}(T_k,M) + \dim_{\k} \Hom_{\CC}(T_k^*,M) = \max \ens{\dim_{\k} \Hom_{\CC}(E,M), \dim_{\k} \Hom_{\CC}(E',M)}.$$
	\end{lem}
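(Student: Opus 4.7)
The proof should be essentially a direct translation via the 2-Calabi--Yau duality of $\CC$. The plan is to unwind the definition of compatibility applied to the shifted exchange pair $(\Sigma^2 T_k, \Sigma^2 T_k^*)$, then convert every $\Hom$-space that appears by means of the bifunctorial isomorphism $\Hom_{\CC}(X,\Sigma^2 Y) \simeq D\Hom_{\CC}(Y,X)$ recalled in Section~\ref{section:background}.

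First I would check that $(\Sigma^2 T_k, \Sigma^2 T_k^*)$ really is an exchange pair and identify its exchange triangles. Since $\Sigma$ is a triangle autoequivalence of $\CC$, applying $\Sigma^2$ to the two exchange triangles associated with $(T_k,T_k^*)$ yields the triangles
$$\Sigma^2 T_k \fl \Sigma^2 E \fl \Sigma^2 T_k^* \fl \Sigma^3 T_k \text{ and } \Sigma^2 T_k^* \fl \Sigma^2 E' \fl \Sigma^2 T_k \fl \Sigma^3 T_k^*,$$
so the middle terms of the exchange triangles associated with $(\Sigma^2 T_k,\Sigma^2 T_k^*)$ are precisely $\Sigma^2 E$ and $\Sigma^2 E'$. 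Moreover $\Sigma^{-1}(\Sigma^2 T_k) = \Sigma T_k$ and $\Sigma^{-1}(\Sigma^2 T_k^*) = \Sigma T_k^*$, which matches the first two alternatives in the statement.

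Next I would write down the compatibility condition for $M$ with respect to $(\Sigma^2 T_k, \Sigma^2 T_k^*)$ using the definition recalled at the end of Section~\ref{section:background}. Outside the two exceptional isomorphism cases, this condition reads
$$\dim_{\k} \Hom_{\CC}(M,\Sigma^2 T_k) + \dim_{\k} \Hom_{\CC}(M,\Sigma^2 T_k^*) = \max \ens{\dim_{\k} \Hom_{\CC}(M,\Sigma^2 E), \dim_{\k} \Hom_{\CC}(M,\Sigma^2 E')}.$$
Applying the 2-Calabi--Yau duality to each of the four $\Hom$-spaces (and using that $D$ preserves dimensions) transforms this equation term-by-term into the equation displayed in the lemma. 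The equivalence is then an equivalence of conditions, establishing both directions simultaneously.

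There is no real obstacle here: the rigidity of $M$ and the two exceptional isomorphism cases are propagated automatically through the shift, and the only subtle point to keep track of is the correct indexing of the exchange triangles under the autoequivalence $\Sigma^2$. The statement is thus really just the 2-Calabi--Yau shadow of the compatibility condition of \cite{BMR3}.
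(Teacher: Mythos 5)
Your proposal is correct and follows essentially the same route as the paper: unwind the definition of compatibility for the shifted pair, observe that the exchange triangles of $(\Sigma^2 T_k,\Sigma^2 T_k^*)$ are the $\Sigma^2$-images of those of $(T_k,T_k^*)$, and convert each $\dim_{\k}\Hom_{\CC}(M,\Sigma^2 -)$ via the 2-Calabi--Yau isomorphism. You also correctly read the second exceptional case as $M \simeq \Sigma T_k^*$ (the ``$\not\simeq$'' in the statement is a typo), which is exactly what the paper's own proof does.
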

	\begin{proof}
		It follows from the definition that $M$ is compatible with the exchange pair $(\Sigma^2 T_k, \Sigma^2 T_k^*)$ if and only if either $M \simeq \Sigma T_k$ or $M \simeq \Sigma T_k^*$ or 
		$$
			\dim_{\k} \Hom_{\CC}(M,\Sigma^2 T_k)  + \dim_{\k} \Hom_{\CC}(M,\Sigma^2 T_k^*) 
							= \max \ens{\dim_{\k} \Hom_{\CC}(M,\Sigma^2 E), \dim_{\k} \Hom_{\CC}(M,\Sigma^2 E')}
		$$
		because $$\Sigma^2 T_k \fl \Sigma^2 E \fl \Sigma^2 T_k^* \fl \Sigma^3 T_k \text{ and }\Sigma^2 T_k^* \fl \Sigma^2 E \fl \Sigma^2 T_k \fl \Sigma^3 T_k^*$$
		are exchange triangles.

		Now, since $\CC$ is Calabi-Yau of dimension 2, we have $\dim_{\k} \Hom_{\CC}(X,\Sigma^2 Y) = \dim_{\k} \Hom_{\CC}(Y,X)$ for any objects $X,Y$ in $\CC$.

		Therefore, if $M \not \simeq \Sigma T_k$ and $M \not \simeq \Sigma T_k^*$, then $M$ is compatible with the exchange pair $(\Sigma^2 T_k, \Sigma^2 T_k^*)$ if and only if
		$$\dim_{\k} \Hom_{\CC}(T_k,M) + \dim_{\k} \Hom_{\CC}(T_k^*,M) = \max \ens{\dim_{\k} \Hom_{\CC}(E,M), \dim_{\k} \Hom_{\CC}(E',M)}.$$
	\end{proof}

	\begin{lem}\label{lem:induction}
		Let $T=\bigoplus_{i=1}^n T_i$ be a tilting object in $\CC$ and let $T' = \overline T \oplus T_k^*$ be its mutation at $k$, where $\overline T = \bigoplus_{i \neq k} T_i$. Let $M$ and $N$ be two indecomposable rigid objects in $\CC$ which are compatible with the exchange pair $(\Sigma^2 T_k,\Sigma^2 T^*_k)$. Assume that $M,N \not \in \add(\Sigma T)$ and that $\ddim \Hom_{\CC}(T,M) = \ddim \Hom_{\CC}(T,N)$. Then either $M \simeq N \simeq \Sigma T_k^*$ or $M,N \not \in \add(\Sigma T')$ and $\ddim \Hom_{\CC}(T',M) = \ddim \Hom_{\CC}(T',N)$.
	\end{lem}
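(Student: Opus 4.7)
The plan is to observe that $T$ and $T'$ share every summand except the $k$-th, so the coordinates of $\ddim \Hom_\CC(T', M)$ and $\ddim \Hom_\CC(T', N)$ indexed by $i \neq k$ automatically agree by hypothesis; moreover, since $M, N \not\in \add(\Sigma T)$, membership in $\add(\Sigma T')$ reduces to the condition $M \simeq \Sigma T_k^*$ (respectively $N \simeq \Sigma T_k^*$). Hence the lemma reduces to two disjoint claims: if neither $M$ nor $N$ is isomorphic to $\Sigma T_k^*$, then $\dim_\k \Hom_\CC(T_k^*, M) = \dim_\k \Hom_\CC(T_k^*, N)$; and if one of them is $\simeq \Sigma T_k^*$, then so is the other.

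First I would treat the generic case. Neither $M$ nor $N$ is isomorphic to $\Sigma T_k$ (by the hypothesis $M, N \not\in \add(\Sigma T)$), nor to $\Sigma T_k^*$ (by the assumption of this case), so Lemma \ref{lem:dim} applies and yields, for $X \in \{M, N\}$,
$$\dim_\k \Hom_\CC(T_k, X) + \dim_\k \Hom_\CC(T_k^*, X) = \max \ens{\dim_\k \Hom_\CC(E, X), \dim_\k \Hom_\CC(E', X)}.$$
Since $E$ and $E'$ lie in $\add(\overline T)$, the two quantities on the right are $\Z$-linear combinations of the coordinates of $\ddim \Hom_\CC(T, X)$ indexed by $i \neq k$; these are common to $M$ and $N$ by hypothesis, as is $\dim_\k \Hom_\CC(T_k, X)$. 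Solving for $\dim_\k \Hom_\CC(T_k^*, X)$ then supplies the missing coordinate.

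The remaining case is when, say, $M \simeq \Sigma T_k^*$. Here I would first establish that $\Hom_\CC(T, \Sigma T_k^*) \simeq S_k$, the simple top of $P_k = \Hom_\CC(T, T_k)$. Applying $\Hom_\CC(T, -)$ to the exchange triangle $T_k^* \fl E' \fl T_k \fl \Sigma T_k^*$ and using rigidity of $T$ (so that $\Hom_\CC(T, \Sigma E') = 0$ and $\Hom_\CC(T, \Sigma T_k) = 0$) produces an exact sequence $\Hom_\CC(T, E') \fl P_k \fl \Hom_\CC(T, \Sigma T_k^*) \fl 0$, and the minimality of $E' \fl T_k$ as a right $\add(\overline T)$-approximation identifies the image of the first map with $\mathrm{rad}\, P_k$. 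Therefore $\ddim \Hom_\CC(T, N) = \ddim \Hom_\CC(T, M) = \ddim S_k$ has total dimension one, forcing $\Hom_\CC(T, N) \simeq S_k$. Invoking the Buan--Marsh--Reiten equivalence $\Hom_\CC(T,-) : \CC/(\add(\Sigma T)) \xrightarrow{\sim} \modd B$ together with the indecomposability of $N$ and the assumption $N \not\in \add(\Sigma T)$, we conclude $N \simeq \Sigma T_k^*$ in $\CC$, placing us in the first alternative of the lemma. The case $N \simeq \Sigma T_k^*$ is symmetric.

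I expect the main technical point to be the identification $\Hom_\CC(T, \Sigma T_k^*) \simeq S_k$, which, while standard in cluster-tilting theory, hinges on carefully using the approximation property of the exchange triangle to locate the image of $\Hom_\CC(T, E') \fl P_k$ precisely in $\mathrm{rad}\, P_k$. Once this is in hand, everything else reduces to bookkeeping with the compatibility equation of Lemma \ref{lem:dim}.
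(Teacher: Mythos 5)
Your proof is correct and follows essentially the same route as the paper: the generic case is handled identically, via Lemma \ref{lem:dim} together with the observation that $E, E' \in \add(\overline T)$ so that the right-hand sides of the compatibility equations for $M$ and $N$ coincide. The only divergence is in the degenerate case $M \simeq \Sigma T_k^*$, where the paper computes $\dim_{\k}\Hom_{\CC}(T_i,N) = \delta_{ik}$ and invokes the uniqueness of the two complements of the almost complete object $\overline T$, whereas you identify $\Hom_{\CC}(T,\Sigma T_k^*)$ with the simple $S_k$ and appeal to the Buan--Marsh--Reiten equivalence --- both work, though for that identification the direct count $\dim_{\k}\Hom_{\CC}(T_i,\Sigma T_k^*) = \dim_{\k}\Ext^1_{\CC}(T_i,T_k^*) = \delta_{ik}$ is cleaner than first locating the image of $\Hom_{\CC}(T,E') \to P_k$ in the radical.
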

	\begin{proof}
		Assume first that $M$ belongs to $\add(\Sigma T')$. Since $T'=\overline T \oplus T_k^*$ and $M \not \in \add(\Sigma T)$, we get $M \simeq \Sigma T_k^*$. Moreover, since $T'$ is a tilting object, it is rigid and thus
		$$\dim_{\k} \Hom_{\CC}(T_i,M) = \dim_{\k} \Hom_{\CC}(T_i,\Sigma T_k^*) = \delta_{ik}.$$
		Hence, we also have $\dim_{\k} \Hom_{\CC}(T_i,N) = \delta_{ik}$. Since $N$ is indecomposable and rigid, it follows that $\overline T \oplus \Sigma^{-1}N$ is a tilting object. Since the almost complete object $\overline T $ admits only two complements which are $T_k$ and $T_k^*$, it follows that $\Sigma^{-1}N \simeq T_k$ or $\Sigma^{-1}N \simeq T_k^*$. By assumption, $N \not \in \add(\Sigma T)$ and therefore, $N \simeq \Sigma T_k^* = M$. 

		By symmetry, we can assume that $M,N \not \in \add(\Sigma T')$. Then, since $M$ and $N$ are compatible with the exchange pair $(\Sigma^2 T_k,\Sigma^2 T^*_k)$, it follows from Lemma \ref{lem:dim} that
		$$\dim_{\k} \Hom_{\CC}(T_k,M) + \dim_{\k} \Hom_{\CC}(T_k^*,M) = \max \ens{\dim_{\k} \Hom_{\CC}(E,M), \dim_{\k} \Hom_{\CC}(E',M)}$$
		and
		$$\dim_{\k} \Hom_{\CC}(T_k,N) + \dim_{\k} \Hom_{\CC}(T_k^*,N) = \max \ens{\dim_{\k} \Hom_{\CC}(E,N), \dim_{\k} \Hom_{\CC}(E',N)}.$$
		By assumption, $\ddim \Hom_{\CC}(T,M) = \ddim \Hom_{\CC}(T,N)$ so that $\dim_{\k} \Hom_{\CC}(T_i,N) = \dim_{\k} \Hom_{\CC}(T_i,N)$ for any $i$ such that $1 \leq i \leq n$. And since $E,E'$ are in $\add(\overline T)$, we have $\dim_{\k} \Hom_{\CC}(E,N) = \dim_{\k} \Hom_{\CC}(E,M)$ and $\dim_{\k} \Hom_{\CC}(E',M) = \dim_{\k} \Hom_{\CC}(E',N)$. Therefore, $\dim_{\k} \Hom_{\CC}(T_k^*,M) = \dim_{\k} \Hom_{\CC}(T_k^*,N)$ and thus $\ddim \Hom_{\CC}(T',M) = \ddim \Hom_{\CC}(T',N)$.
	\end{proof}

	\subsection{Proof of Theorem \ref{theorem:transjective}}
		Let $\CC$ be a cluster category and $T$ a tilting object in $\CC$. Let $\widetilde M$ and $\widetilde N$ be two indecomposable transjective objects in $\CC$. They are thus rigid and by \cite[Corollary 4.2]{BMR3}, they are compatible with any exchange pair in $\CC$. Since $\Hom_{\CC}(T,-)$ induces an equivalence $\CC/(\add(\Sigma T)) \xrightarrow{\sim} \modd B$, in order to prove the theorem, it is enough to prove that if $\widetilde M, \widetilde N \not \in \add(\Sigma T)$ are such that $\ddim \Hom_{\CC}(T,\widetilde M) = \ddim \Hom_{\CC}(T,\widetilde N)$, then $\widetilde M \simeq \widetilde N$ in $\CC$. It is known that two tilting object in $\CC$ can be joined by a sequence of mutations, see \cite{BMRRT}. In particular, there exists a sequence of mutations $T \fl T' \fl \cdots \fl T^{(l)} =\kQ$ joining $T$ to the tilting object $\kQ$ in $\CC$.

		Let $\overline T = \bigoplus_{i \neq k} T_i$ and let $T' = \overline T \oplus T_k^*$ be the mutation of $T$ at $k$. By Lemma \ref{lem:induction}, either $\widetilde M \simeq \widetilde N \simeq \Sigma T_k^*$ and the claim is proved, or $\widetilde M,\widetilde N \not \in \add(\Sigma T')$ and $\ddim \Hom_{\CC}(T',\widetilde M) = \ddim \Hom_{\CC}(T',\widetilde N)$. By induction on $l$, either we have obtained $\widetilde M \simeq \widetilde N$ at some point, or we end up with $\ddim \Hom_{\CC}(\kQ,\widetilde M) = \ddim \Hom_{\CC}(\kQ,\widetilde N)$. Therefore, $\Hom_{\CC}(\kQ,\widetilde M)$ and $\Hom_{\CC}(\kQ,\widetilde N)$ are two indecomposable rigid modules with the same dimension vectors over the hereditary algebra $\kQ$. Since for hereditary algebras, it is well-known that rigid modules are uniquely determined by their dimension vectors, it follows that $\Hom_{\CC}(\kQ,\widetilde M)$ and $\Hom_{\CC}(\kQ,\widetilde N)$ are isomorphic modules and therefore, so are their respective indecomposable lifts $\widetilde M$ and $\widetilde N$ in $\CC$. \hfill \qed

	\subsection{A counter-example without the transjectivity assumptions}\label{section:cex}
		We now show an example of two non-isomorphic indecomposable non-transjective rigid modules over a cluster-tilted algebra which have the same dimension vectors and which do not lift to rigid objects in the cluster category.

		Consider the cluster-tilted algebra $B$ of affine type $\widetilde A_{2,1}$ given by the quiver 
		$$\xymatrix@=1em{
				&& 2 \ar[ld]_{\alpha}\\
			& 1 \ar@<+2pt>[rr]^{\beta} \ar@<-2pt>[rr] && 3 \ar[lu]_{\gamma}
		}$$
		with relations $\alpha\beta = \beta\gamma = \gamma\alpha = 0$. 

		The Auslander-Reiten quiver of the corresponding cluster category contains exactly one exceptional tube which is of rank two. In $\modd B$, the image of this tube is 
		\begin{center}
			\begin{tikzpicture}[scale = .5]
				\tikzstyle{every node}=[font=\tiny]
				\fill (0,0) node {$\begin{array}{c} 2\\1\\3\\2 \end{array}$};

				\draw[->] (.5,.5) -- (2,2);

				\fill (2.5,2.5) node {$\begin{array}{c} 2\\1\\3 \end{array}$};

				\draw[->] (3,3) -- (4.5,4.5);

				\fill (0,5) node {$\begin{array}{c} 1\\3 \end{array}$};

				\draw[->] (.5,4.5) -- (2,3);

				\fill (7.5,2.5) node {$\begin{array}{c} 1\\3\\2 \end{array}$};

				\draw[->] (8,3) -- (9.5,4.5);

				\fill (5,5) node {$\begin{array}{c} 2\\1 ~ 1 \\ 3 ~ 3 \\ 2\end{array}$};

				\draw[->] (5.5,4.5) -- (7,3);

				\fill (10,5) node {$\begin{array}{c} 1\\3 \end{array}$};

				\draw[->] (8,2) -- (9.5,.5);

				\fill (10,0) node {$\begin{array}{c} 2\\1\\3\\2 \end{array}$};

				\draw[->] (.5,5) -- (2,6.5);

				\draw[->] (5.5,5) -- (7,6.5);

				\draw[->] (3,6.5) -- (4.5,5);

				\draw[->] (8,6.5) -- (9.5,5);

				\draw[gray!50] (0,1.5) -- (0,4);
				\draw[gray!50,dashed] (0,6) -- (0,8);

				\draw[gray!50] (10,1.5) -- (10,4);
				\draw[gray!50,dashed] (10,6) -- (10,8);
			\end{tikzpicture}
		\end{center}
		
		Therefore, if we set 
		$$M = \begin{array}{c} 1\\3\\2 \end{array} \text{ and } N = \begin{array}{c} 2\\1\\3 \end{array},$$
		then $M$ and $N$ have the same dimension vector $(111)$ and $\Ext^1_{B}(N,N) \simeq D\overline{\Hom}_{B}(N,M) = 0$, because every nonzero morphism from $M$ to $N$ factors through the injective module at 2. Similarly, $\Ext^1_{B}(M,M) = 0$. However, $M$ and $N$ are not isomorphic.

		Finally, we recall that in \cite{Smith:lifts}, the author proved that any rigid module of projective dimension at most one over a cluster-tilted algebra lifts to a rigid object in the cluster category, see also \cite{FL:lifts}. As it appears in this example, this does not hold true without the assumption that the projective dimension is at most one. Indeed, the lifts of $M$ and $N$ in the cluster category are objects of quasi-length two in an exceptional tube of rank two. In particular, they are not rigid.

	\subsection{Proof of Corollary \ref{corol:dentransj}}
		Let $\mathcal A$ be an acyclic cluster algebra. It is known, see for instance \cite[\S 7]{cluster4}, that the denominator vectors of the cluster variables in $\mathcal A$ do not depend on the choice of the coefficient system. Therefore, without loss of generality, we can assume that $\mathcal A$ is coefficient-free. 

		Let $Q$ be an acyclic quiver which is mutation-equivalent to the quiver in the initial seed of $\mathcal A$ and $\CC$ the cluster category of $Q$. For any tilting object $T$ and any indecomposable rigid object $M$ in $\CC$, we denote by $X^T_M$ the cluster variable corresponding to $M$ expressed as a Laurent polynomial in the cluster corresponding to $T$ under the bijection mentioned in the introduction. We denote by $\den(X^T_M)$ the denominator vector of the Laurent polynomial $X^T_M$, which we view as a vector in $\Z^n$.

		Let $x$ and $y$ be two transjective cluster variables in $\mathcal A$ and assume that they have the same denominator vectors in a given cluster $\mathbf c$. Let $T$ be the tilting object in $\CC$ corresponding to $\mathbf c$ and $M$ and $N$ transjective objects in $\CC$ such that $x = X^T_M$ and $y = X^T_N$. By assumption, we have $\den(X^T_M) = \den(X^T_N)$. 

		If $M$ or $N$ are in $\add(\Sigma T)$, then $X^T_M$ and $X^T_N$ are in the cluster $\mathbf c$ and therefore, $X^T_M = X^T_N$. If they are not in $\add(\Sigma T)$, then since $M$ and $N$ are transjective, it follows from \cite{BMR3} that $\den(X^T_M)= \ddim \Hom_{\CC}(T,M)$ and $\den(X^T_N)= \ddim \Hom_{\CC}(T,N)$. It follows that $\Hom_{\CC}(T,M)$ and $\Hom_{\CC}(T,N)$ are indecomposable rigid transjective $B$-modules with the same dimension vector and thus, Theorem \ref{theorem:transjective} implies that $\Hom_{\CC}(T,M) = \Hom_{\CC}(T,N)$. Therefore, $M \simeq N$ in $\CC$ and thus $x = X^T_M = X^T_N =y$. \hfill \qed

\section{Cluster-concealed algebras}\label{section:clusterconcealed}
	In this section we focus on the case of transjective tilting objects. We first prove that the full subgraph of the mutation graph of a cluster category consisting of the transjective tilting objects is connected.

	Let $A$ be a hereditary algebra and $T$ be a preinjective tilting $A$-module. Letting $\T(T) = \ens{M \in \modd A \ | \ \Ext^1_A(T,M)=0}$ be the torsion class associated with the tilting module $T$, the preinjectivity of $T$ implies that the subcategory $\ind-\T(T) = \ind-A \cap \T(T)$ contains only finitely many objects.
	\begin{prop}\label{prop:muttransjective}
		Let $A$ be a hereditary algebra and $T$ a preinjective tilting module. Then there exists a sequence of tilting $A$-modules
		$$\xymatrix{T = T^{(0)} \ar[r] &  T^{(1)} \ar[r] & \cdots \ar[r] & T^{(t)} = DA_A}$$
		where each $T^{(i)}$ is a tilting $A$-module differing from $T^{(i+1)}$ by a single indecomposable summand and such that $\T(T^{(i)}) \supsetneqq \T(T^{(i+1)})$ for every $i<t$.
	\end{prop}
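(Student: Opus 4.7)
The plan is to induct on the finite cardinal $|\ind-\T(T)|$, which is finite because $T$ is preinjective. Since every indecomposable injective $I_j$ satisfies $\Ext^1_A(T,I_j)=0$, one has $\add(DA)\subseteq\T(T)$, so this cardinal is at least $n$. For the base case I would take $|\ind-\T(T)|=n$, meaning $\T(T)=\add(DA)$; the injectivity of the assignment $T\mapsto\T(T)$ on basic tilting modules then forces $T=DA$, and I take $t=0$. Here I use the fact that for hereditary $A$, quotients of injectives are injective, so $\T(DA)=\gen(DA)=\add(DA)$, which pins down the unique basic tilting module having torsion class $\add(DA)$.

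For the inductive step, the plan is to exhibit a single module-mutation $T\to T^{(1)}=\overline T\oplus T_k^*$ satisfying $\T(T^{(1)})\subsetneqq\T(T)$, then apply the induction hypothesis to $T^{(1)}$; the strict inclusion guarantees $|\ind-\T(T^{(1)})|<|\ind-\T(T)|$, so the induction terminates. Since the indecomposable summands of $T$ are precisely the Ext-projectives of $\T(T)$ and those of $\add(DA)$ are the indecomposable injectives, $T\ne DA$ supplies a non-injective summand $T_k$. I would try to choose such a $T_k$ so that (i) $\overline T=T/T_k$ is sincere, guaranteeing the second complement $T_k^*$ exists in $\modd A$, and (ii) the exchange sequence in $\modd A$ takes the form
\[
0\to T_k\to E\to T_k^*\to 0,\quad E\in\add(\overline T).
\]
Granting such a $T_k$, applying $\Hom_A(-,X)$ to this sequence for any $X\in\T(T^{(1)})$, and using $\Ext^1_A(\overline T,X)=\Ext^1_A(T_k^*,X)=0$ together with $\Ext^2_A=0$ (as $A$ is hereditary), yields $\Ext^1_A(T_k,X)=0$, so $X\in\T(T)$; strictness follows because the non-split exchange sequence witnesses $\Ext^1_A(T_k^*,T_k)\ne 0$, so $T_k\in\T(T)\setminus\T(T^{(1)})$.

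The hard part is establishing the existence of a summand $T_k$ satisfying both (i) and (ii). The cleanest approach invokes the Happel-Unger analysis of the poset of basic tilting $A$-modules ordered by $T'\le T''$ iff $\T(T')\subseteq\T(T'')$: $DA$ is the unique minimum, so from $T\ne DA$ one obtains a downward cover $T\succ T^{(1)}$, and in the hereditary setting every such cover is realized by a module-mutation with exchange sequence in the decreasing direction, via the framework of \cite{HU:almostcomplete}. A more hands-on alternative is to pick $T_k$ to be a non-injective summand sitting closest to the injective end of the preinjective component, so that $\tau T_k\in\T(T)$ by AR duality; the Auslander-Reiten sequence ending at $T_k$ then provides the candidate exchange sequence, and one uses the Ext-projectivity of the $T_i$ in $\T(T)$ to verify that its middle term lies in $\add(\overline T)$ and that $\overline T$ is sincere. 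Either way, the delicate point is the simultaneous verification of sincerity of $\overline T$ and the direction of the exchange sequence, which is where the preinjective hypothesis on $T$ plays the crucial role.
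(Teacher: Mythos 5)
Your overall strategy matches the paper's: induct on the finite number of indecomposables in $\T(T)$, performing one mutation at a time with strictly decreasing torsion classes, and the parts you actually carry out are correct. Indeed your strictness argument --- reading $T_k\in\T(T)\setminus\T(T^{(1)})$ directly off the non-split exchange sequence as a nonzero class in $\Ext^1_A(T_k^*,T_k)$ --- is slightly slicker than the paper's, which instead argues that $T_k$ cannot be generated by $T^{(1)}$. The problem is that the step you yourself label ``the hard part'', namely producing a non-injective summand $T_k$ such that $\overline T$ is sincere and the exchange sequence runs $0\fl T_k\fl E\fl T_k^*\fl 0$ with $E\in\add\overline T$, is precisely the substance of the paper's proof, and neither of your sketches establishes it. Deferring to the Happel--Unger poset of tilting modules is circular at exactly this point: that every $T\neq DA_A$ admits a \emph{downward} cover realised by an exchange (rather than being stuck because no almost complete summand $\overline T$ is sincere, in which case the second complement does not exist in $\modd A$) is the statement to be proved, and \cite{HU:almostcomplete} is used in the paper only for ``exactly two complements when $\overline T$ is sincere'' and for the shape of the exchange sequence. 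Your hands-on alternative picks the wrong end of the preinjective component: for any non-projective summand $T_k$ the Auslander--Reiten sequence ending at $T_k$ is itself a non-split extension of $T_k$ by $\tau T_k$, so $\Ext^1_A(T_k,\tau T_k)\neq 0$ and hence $\tau T_k\notin\T(T)$, contradicting your claim; moreover that sequence has $T_k$ as its quotient, not as its submodule, so it cannot serve as the exchange sequence you want. The paper's choice is the opposite one: take $T_0$ non-injective admitting no path $T_i\rightsquigarrow T_0$ from another summand, so that $\Hom_A(T,T_0)$ is a simple projective non-injective module over the tilted algebra $C=\End_A(T)$; the almost split sequence starting there has projective middle term, and transporting it back to $\modd A$ via $-\otimes_C T$ embeds $T_0$ into a module of $\add\overline T$, which yields the sincerity of $\overline T$ and hence, by \cite{HU:almostcomplete}, the second complement and the exchange sequence.

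A second, smaller omission: to iterate you need $T^{(1)}$ to be preinjective again, since your construction of the next summand to exchange uses preinjectivity of the tilting module; finiteness of $\T(T^{(1)})$ alone does not let you reapply the inductive hypothesis as you have set it up. The paper gets this immediately from the exchange sequence: $T_0'$ is a quotient of $E\in\add\overline T$, hence a successor of a preinjective module, hence preinjective.
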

	\begin{proof}
		If $T = DA_A$, there is nothing to prove. Otherwise, $T$ has a non-injective indecomposable summand. This implies that $T$ has a non-injective indecomposable summand $T_0$ such that there is no path $T_i \rightsquigarrow T_0$ with $T_i \not \simeq T_0$ another indecomposable summand of $T$. We claim that $\overline T = \bigoplus_{i \neq 0}T_i$ is sincere.

		Notice that the hypothesis means that $\Hom_A(T,T_0)$ is a simple projective module over the tilted algebra $C = \End_A(T)$ and moreover it is non-injective (because $C$ is connected). Hence the left minimal almost split morphism starting with $\Hom_A(T,T_0)$ has a projective target and we have an almost split sequence in $\modd C$
		$$0 \fl \Hom_A(T,T_0) \fl \Hom_A(T,T^*) \fl Z \fl 0$$
		with $T^* \in \add \overline T$. We claim that the canonical morphism $T_0 \fl T^*$ is injective. In order to prove this, we first observe that $Z_C$ belongs to the class $\mathcal Y(T) = \ens{Y_C \ | \ \Tor^C_1(Y,T)=0}$. Indeed, if this is not the case, then, because the torsion pair $(\mathcal X(T),\mathcal Y(T))$ in $\modd C$ is split, and $Z$ is indecomposable then it would belong to the class $\mathcal X(T) = \ens{X_C \ | \ X \otimes_C T = 0}$ and then there exists an indecomposable $A$-module $N$ such that $\Hom_A(T,N)=0$ and $\Ext^1_A(T,N) \simeq Z$ so that the above almost split sequence is a connecting sequence and $T_0$ is injective, a contradiction. This completes the proof that $Z \in \mathcal Y(T)$ and hence there exists $M \in \T(T)$ such that $Z= \Hom_A(T,M)$. The almost split sequence
		$$0 \fl \Hom_A(T,T_0) \fl \Hom_A(T,T^*) \fl \Hom_A(T,M) \fl 0$$
		yields, upon applying $- \otimes_C T$, a short exact sequence in $\modd A$
		$$0 \fl T_0 \fl T^* \fl M \fl 0.$$
		This completes the proof that the morphism $T_0 \fl T^*$ is injective and therefore the almost complete module $\overline T$ is sincere.

		As recalled in Section \ref{section:background}, there exists exactly one indecomposable module $T_0' \not \simeq T_0$ such that $T' = \overline T \oplus T_0'$ is a tilting module and moreover by \cite{HU:almostcomplete}, there exists a non-split short exact sequence
		\begin{equation}\label{eq:ses}
			0 \fl T_0 \fl E \fl T_0' \fl 0
		\end{equation}
		with $E \in \add \overline T$. The existence of such a sequence shows that $T_0'$ is a successor of a preinjective module, and hence is preinjective. Therefore $T'$ is preinjective as well.
	
		Now we claim that $\T(T') \subsetneqq \T(T)$. Let $M \in \T(T')$, then $\Ext^1_A(T',M)=0$. Applying the functor $\Hom_A(-,M)$ to the sequence \eqref{eq:ses}, we get an epimorphism
		$$0 = \Ext^1_A(E,M) \fl \Ext^1_A(T_0,M) \fl 0$$
		because $A$ is hereditary. Hence $\Ext^1_A(T_0,M) = 0$. Since $\Ext^1_A(\overline T, M)=0$ we have $\Ext^1_A(T,M) = 0$ and so $M \in \T(T)$. This shows that $\T(T') \subset \T(T)$. On the other hand, $T_0 \not \in \T(T')$ because otherwise, there exists a direct summand $T_1$ of $T'$ such that there exists a non-zero map $T_1 \fl T_0$. Now such a summand cannot be $T_0'$ because otherwise we would have a cycle $T_0' \fl T_0 \fl * \fl T_0'$ with $T_0'$ and $T_0$ preinjective, an absurdity. Hence $T_1 \in \add \overline T$ and so $T_1 \in \add T$ which contradicts the choice of $T_0$. Therefore we indeed have $T_0 \not \in \T(T')$. Since $T_0 \in \T(T)$, this completes the proof that $\T(T') \subsetneqq \T(T)$. Setting $T = T^{(0)}$, $T' = T^{(1)}$, the proof of the proposition is completed by induction on $|\ind-\T(T^{(i)})|$.
	\end{proof}

	\subsection{Proof of Proposition \ref{prop:concealed}}
		The proof is similar to the proof of Theorem \ref{theorem:transjective}. Let $\CC$ be a cluster category and $T$ a tilting object in $\CC$ with $T$ transjective. Let $\overline T = \bigoplus_{i \neq k} T_i$. Assume that the mutation $T' = \overline T \oplus T_k^*$ is also transjective. Let $M$ and $N$ be two indecomposable rigid objects in $\CC$. If $M$ is transjective, then it follows from \cite[Corollary 4.2]{BMR3} that it is compatible with any exchange pair in $\CC$. If $M$ is regular, then the proof of \cite[Lemma 3.5]{BM:affine} applies and it follows that $M$ is compatible with any exchange pair $(X,X^*)$ where $X$ and $X^*$ are transjective. In particular, $M$ is compatible with the exchange pair $(\Sigma^2 T_k,\Sigma^2 T_k^*)$.

		Since $\Hom_{\CC}(T,-)$ induces an equivalence $\CC/(\add(\Sigma T)) \xrightarrow{\sim} \modd B$, it is enough to prove that if $M, N \not \in \add(\Sigma T)$ are such that $\ddim \Hom_{\CC}(T,M) = \ddim \Hom_{\CC}(T,N)$, then $M \simeq N$ in $\CC$. According to Proposition \ref{prop:muttransjective}, there exists a sequence of mutations $T \fl T' \fl \cdots \fl T^{(l)} =\kQ$ joining $T$ to $\kQ$ in $\CC$ such that every tilting object arising in the sequence is transjective. The rest of the proof follows by induction using Lemma \ref{lem:induction}, as in the proof of Theorem \ref{theorem:transjective}. \hfill \qed

	\subsection{Proof of Corollary \ref{corol:affineconcealed}}
		In order to prove Corollary \ref{corol:affineconcealed}, it is enough to prove that for a cluster-concealed algebra which is either representation-finite or tame, any indecomposable rigid $B$-module $M$ lifts to an indecomposable rigid object $\widetilde M$ in the corresponding cluster category. Let thus $Q$ be a Dynkin or an affine quiver and let $\CC$ denote the cluster category of $Q$. Assume that $B = \End_{\CC}(T)$ for some tilting object $T$ in $\CC$. 

		If $M$ is a transjective $B$-module, its lift is a transjective object in $\CC$ which is therefore rigid, proving the claim. If $Q$ is a Dynkin quiver, then any object in $\CC$ is transjective so we are done. Assume therefore that $Q$ is an affine quiver and that $M$ is not transjective. Then its lift $\widetilde M$ lies in an exceptional tube $\mathcal T$ of the Auslander-Reiten quiver $\Gamma(\CC)$ of $\CC$. The tube containing $M$ in the Auslander-Reiten quiver of $\modd B$ is standard. Similarly, the tube containing $F_{\kQ}\widetilde M  = \Hom_{\CC}(kQ,\widetilde M)$ in the Auslander-Reiten quiver of $\modd \kQ$ is standard. Moreover, these two tubes are identified with $\mathcal T$, while both objects $M$ and $F_{\kQ}\widetilde M$ correspond to the point $\widetilde M$ in that tube. Since $M$ is rigid, it follows that $F_{\kQ}\widetilde M$ is rigid in $\modd \kQ$. Therefore, it follows from \cite{BMRRT} that $\Ext^1_{\CC}(\widetilde M,\widetilde M) \simeq \Ext^1_{\kQ}(F_{\kQ}\widetilde M,F_{\kQ}\widetilde M) \oplus D\Ext^1_{\kQ}(F_{\kQ}\widetilde M,F_{\kQ}\widetilde M) = 0$, proving the claim. The corollary then follows from Proposition \ref{prop:concealed}.

	\subsection{Proof of Corollary \ref{corol:denaffine}}
		The proof is similar to the proof of Corollary \ref{corol:dentransj}. We assume that $\mathcal A$ is a coefficient-free cluster algebra associated with a quiver mutation-equivalent to an affine quiver $Q$. We consider two cluster variables $x$ and $y$ in $\mathcal A$ which have the same denominator vector in a cluster $\mathbf c$ corresponding to a transjective tilting object $T$ and we let $M$ and $N$ be indecomposable rigid objects in $\CC$ such that $x = X^T_M$ and $y = X^T_N$. By assumption, we have $\den(X^T_M) = \den(X^T_N)$. 

		If $M$ or $N$ is in $\add(\Sigma T)$, then $X^T_M$ and $X^T_N$ are cluster variables in $\mathbf c$ and therefore, $X^T_M = X^T_N$. If they are not in $\add(\Sigma T)$, since $T$ is transjective, then it follows from \cite{BMR3} that $\den(X^T_M)= \ddim \Hom_{\CC}(T,M)$ and $\den(X^T_N)= \ddim \Hom_{\CC}(T,N)$. Therefore, $\Hom_{\CC}(T,M)$ and $\Hom_{\CC}(T,N)$ are indecomposable rigid transjective $B$-modules lifting to indecomposable rigid objects in $\CC$ and they have the same dimension vector. Thus, Proposition \ref{prop:concealed} implies that $\Hom_{\CC}(T,M) = \Hom_{\CC}(T,N)$ and therefore $M \simeq N$ in $\CC$, so that $x = X^T_M = X^T_N =y$. \hfill \qed

\section*{Acknowledgements}
	The first author gratefully acknowledges support from the NSERC of Canada, the FQRNT of Qu\'ebec and the Universit\'e de Sherbrooke.

	The second author gratefully acknowledges support from the ANR \emph{G\'eom\'etrie Tropicale et Alg\`ebres Amass\'ees}. He would also like to thank the first author for his kind hospitality during his stay at the Universit\'e de Sherbrooke where this research was done.

\newcommand{\etalchar}[1]{$^{#1}$}


\begin{thebibliography}{BMR{\etalchar{+}}06}

\bibitem[ABS08]{ABS:slices}
Ibrahim Assem, Thomas Br{\"u}stle, and Ralf Schiffler.
\newblock Cluster-tilted algebras and slices.
\newblock {\em J. Algebra}, 319(8):3464--3479, 2008.

\bibitem[BM10]{BM:affine}
{Aslak} Buan and Robert Marsh.
\newblock Denominators in cluster algebras of affine type.
\newblock {\em Journal of Algebra}, 323:2083--2102, 2010.

\bibitem[BMR{\etalchar{+}}06]{BMRRT}
Aslak Buan, Robert Marsh, Markus Reineke, Idun Reiten, and Gordana Todorov.
\newblock Tilting theory and cluster combinatorics.
\newblock {\em Adv. Math.}, 204(2):572--618, 2006.

\bibitem[BMR07]{BMR1}
Aslak Buan, Robert Marsh, and Idun Reiten.
\newblock Cluster-tilted algebras.
\newblock {\em Trans. Amer. Math. Soc.}, 359(1):323--332, 2007.

\bibitem[BMR09]{BMR3}
Aslak Buan, Robert Marsh, and Idun Reiten.
\newblock Denominators of cluster variables.
\newblock {\em J. Lond. Math. Soc. (2)}, 79(3):589--611, 2009.

\bibitem[BMRT07]{BMRT}
Aslak Buan, Robert Marsh, Idun Reiten, and Gordana Todorov.
\newblock Clusters and seeds in acyclic cluster algebras.
\newblock {\em Proc. Amer. Math. Soc.}, 135(10):3049--3060, 2007.

\bibitem[CK06]{CK2}
Philippe Caldero and Bernhard Keller.
\newblock From triangulated categories to cluster algebras {II}.
\newblock {\em Annales Scientifiques de l'Ecole Normale Sup{\'e}rieure},
  39(4):83--100, 2006.

\bibitem[FK10]{FK}
Changjian Fu and Bernhard Keller.
\newblock On cluster algebras with coefficients and 2-{C}alabi-{Y}au
  categories.
\newblock {\em Trans. Amer. Math. Soc.}, 362:859--895, 2010.

\bibitem[FL09]{FL:lifts}
Changjian Fu and Pin Liu.
\newblock Lifting to cluster-tilting objects in 2-{C}alabi-{Y}au triangulated
  categories.
\newblock {\em Comm. Algebra}, 37(7):2410--2418, 2009.

\bibitem[FZ02]{cluster1}
Sergey Fomin and Andrei Zelevinsky.
\newblock Cluster algebras {I}: Foundations.
\newblock {\em J. Amer. Math. Soc.}, 15:497--529, 2002.

\bibitem[FZ03a]{cluster2}
Sergey Fomin and Andrei Zelevinsky.
\newblock Cluster algebras {II}: Finite type classification.
\newblock {\em Inventiones Mathematicae}, 154:63--121, 2003.

\bibitem[FZ03b]{FZ:cdm03}
Sergey Fomin and Andrei Zelevinsky.
\newblock Cluster algebras: notes for the {CDM}-03 conference.
\newblock pages 1--34, 2003.

\bibitem[FZ07]{cluster4}
Sergey Fomin and Andrei Zelevinsky.
\newblock Cluster algebras {IV}: Coefficients.
\newblock {\em Compositio Mathematica}, 143(1):112--164, 2007.

\bibitem[Gab72]{Gabriel:theorem}
Peter Gabriel.
\newblock Unzerlegbare {D}arstellungen. {I}.
\newblock {\em Manuscripta Math.}, 6:71--103; correction, ibid. 6 (1972), 309,
  1972.

\bibitem[GP12]{GengPeng}
Shengfei Geng and Liangang Peng.
\newblock The dimension vectors of indecomposable modules of cluster-tilted
  algebras and the {F}omin-{Z}elevinsky denominators conjecture.
\newblock {\em Acta Mathematica Sinica, English Series}, 28(3):581--586, 2012.

\bibitem[HU89]{HU:almostcomplete}
Dieter Happel and Luise Unger.
\newblock Almost complete tilting modules.
\newblock {\em Proc. Amer. Math. Soc.}, 107(3):603--610, 1989.

\bibitem[Kel05]{K}
Bernhard Keller.
\newblock On triangulated orbit categories.
\newblock {\em Documenta Mathematica}, 10:551--581, 2005.

\bibitem[Rin11]{Ringel:clusterconcealed}
Claus~Michael Ringel.
\newblock Cluster-concealed algebras.
\newblock {\em Adv. Math.}, 226:1513--1537, 2011.

\bibitem[Smi08]{Smith:lifts}
David Smith.
\newblock On tilting modules over cluster-tilted algebras.
\newblock {\em Illinois J. Math.}, 52:1223--1247, 2008.

\end{thebibliography}

\end{document}